\newtheorem{proposition}{Proposition}[section] 
\theoremstyle{remark}
\newtheorem{remark}[proposition]{Remark}
\newcommand{\A}{\ensuremath{{\mathcal A}}}
\newcommand{\N}{\ensuremath{{\mathbb N}}}
\begin{document}
\title{\replaced{The Monotone Case Approach for the Solution of Certain Multidimensional Optimal Stopping Problems}{\newline The Monotone Case Approach --
	A Review and Its Application to Certain Multidimensional Optimal Stopping Problems}}
\author{S\"oren Christensen\thanks{Christian-Albrechts-Universit\"at, Mathematisches Seminar, Ludewig-Meyn-Str. 4, 24098 Kiel, Germany}
, Albrecht Irle\footnotemark[1]}
\date{\today}
\maketitle

\begin{abstract}
This paper studies explicitly solvable multidimensional optimal stopping problems \added{ of sum- and product-type} in discrete and continuous time using the monotone case approach. It gives a review on monotone case stopping using\deleted{ as a novel feature} the Doob decomposition, resp. Doob-Meyer decomposition in continuous time, also in its multiplicative versions. The approach via these decompositions leads to explicit solutions for a variety of examples, including multidimensional versions of the house-selling and burglar's problem, the Poisson disorder problem, and an optimal investment problem. 
\end{abstract}

\textbf{Keywords:} {Monotone Stopping Rules; Optimal Stopping; Explicit Solutions; Multidimensional; Doob Decomposition; Doob-Meyer Decomposition; House-Selling Problem; Multiple Buying-Selling; Burglar's Problem; Poisson Disorder Problem; Optimal Investment Problem} 

\vspace{.2cm}
\textbf{Mathematics Subject Classification:} {60G40; }{62L10; 91G80}

\section{Introduction}
In multidimensional problems, optimal stopping theory reaches its limits when trying to find explicit solutions for problems with a finite time horizon or an underlying (Markovian) process in dimension $d\geq 2$. In the one-dimensional case with infinite time horizon, the optimal continuation set usually is an interval of the real line, bounded or unbounded, so it remains to determine the boundary of that interval, which boils down to finding equations for one or two, resp., real numbers. A wealth of techniques has been developed to achieve this, see \cite{salminen85}, \cite{Karatzas} for one dimensional diffusions, or \cite{MoSa} and \cite{CST} for jump processes, to name but a few. 

The notion of a multidimensional stopping problem is employed in this article in the following sense. It is used for such problems where the general theory of optimal stopping prescribes that the optimal stopping time is given by the first entrance time of a stochastic process into a $d$-dimensional optimal stopping set with (Euclidean) dimension $d \geq 2$. Some typical cases are as follows: The underlying stochastic process is a Markovian process with $d$-dimensional state space; under certain explicit time dependencies of the pay-off, in particular for finite time horizon, the space-time process arises with $d = 2$. Particularly in discrete time problems multidimensional problems arise due to history dependence for the optimal stopping time, e.g. for a Markovian process of degree $k$, or even full history dependence as in the well-known best choice problem of Robbins\added{, see \cite{MR2144897}}. 
There are a few multidimensional problems, see \cite{dubins-et-al}, \cite{margrabe}, \cite{gerber-shiu}, \cite{shepp-shiryaev:1994}, which, by some transformation method, may be transferred to a one-dimensional problem. Let us call a multidimensional problem truly multidimensional (as a manner of speech) if such a transformation seems hardly possible, at least does not seem to be available in the current literature (as we know it).
Explicit solutions for such problems seem to be rare, but, of course, many techniques have been developed to tackle such problems, either semi-explicitly using nonlinear integral equations, see the monograph \cite{ps} or the more recent article \cite{christensen2016optimal} for an overview, or numerically, see Chapter 8 in \cite{De} and \cite{Gla}. 

The purpose of this note is to provide some examples of seemingly truly multidimensional problems with an explicit solution\replaced{ where the payoffs take the form $\big(\sum_{i=1}^mX^i_n\big)_n$ or $\big(\prod_{i=1}^mX^i_n\big)_n$ for $m$ stochastic processes $(X_n^1)_n,\dots,(X_n^m)_n$ in discrete time as well as in continuous time. Knowing the individual solutions for the $(X_n^i)_n$-problems in general does not seem to lead to the explicit solution for the sum or product problem. Here, we present a class of examples for which this is possible, the key being the notion of monotone stopping problems.}{. The key for this is the notion of monotone stopping problems. }
The class of monotone stopping problems has been used extensively in the solution of optimal stopping problems, in particular in the first decades starting with \cite{MR0132593,MR0157465}. A long list of examples can be found in \cite{CRS} and, more recently, in \cite{Ferguson}. The extension to continuous time problems is not straightforward. This was developed in \cite{ross1971infinitesimal}, \cite{MR533005}, \cite{MR731723}, and \cite{MR1030003}. Although these references are not very recent, it is interesting to note that the solution to certain ``modern'' optimal stopping problems is directly based on the notion of monotone case problems.
Here, one may look e.g. at the odds-algorithm initiated in \cite{MR1797879} and extended by \cite{MR3557090}, or at \cite{doi:10.1080/07474946.2016.1275314} where, to solve the original problem, an auxiliary problem for a two-dimensional process consisting of the underlying Markov process and its running maximum with suitable monotonicity properties is introduced\added{. See also \cite{doi:10.1080/17442508.2018.1541991} for related results}.

This paper aims at presenting the monotone case approach to optimal stopping by a systematic use of the Doob (in continuous time Doob-Meyer) decomposition of the pay-off process, and shows how it may be used to solve some multidimensional stopping problems. The Doob(-Meyer) decomposition is a well-known tool in the treatment of super-/sub-martingales, in particular in optimal stopping it is applied to the Snell envelope. 
In the theory of continuous time finance, the decomposition, applied to the Snell envelope, is used to obtain duality results for option pricing, see \cite{jamshidian2007duality} for an overview and further discussions.

Here we review monotone case problems in terms of the Doob(-Meyer) decomposition with regard to the optimality of the myopic stopping time in Section \ref{sec:monot}. Although this approach 
seems to be natural and is mathematically straightforward, we could not locate it in the literature in this form, so we present it in a survey style\replaced{. Due}{, where due} to the simplicity of this approach,\replaced{ the very short proofs are given. Almost sure finiteness of stopping times is not needed in this approach. The}{, all the proofs are given and we note that the} multiplicative Doob(-Meyer) decomposition is included, and our treatment covers the discrete and continuous time case in a unified way \added{such }that it can be used for the application in multidimensional problems in Sections \ref{sec:multi} and \ref{sec:ex}.
\replaced{In Section \ref{sec:multi} we show that, under certain assumptions, the monotone case property of the individual stopping problems carries over to the sum- and product-type problems providing an explicit solution. For this, we use the Doob(-Meyer) decomposition. 
}{In Section \ref{sec:multi}, we observe that, under certain assumptions, the monotonicity property of the special individual underlying processes carries over to sum- and product-type problems, which makes these  also solvable. }
We \deleted{then }discuss a variety of examples in Section \ref{sec:ex}. We start with multidimensional versions of the classical house-selling and burglar's problem. Here, the original one-dimensional problems are well-known to be solvable using the theory of monotone stopping. 
Also the multidimensional house-selling problem with recall was already solved in \cite{bruss1997multiple}.
The last two examples are multidimensional extensions of continuous-time problems: 
the Poisson disorder problem and the optimal investment problem, which in one dimension are usually solved using other argument. The more involved arguments concerning the Doob-Meyer decomposition are treated there in detail.

\section{Monotone Stopping Problems}\label{sec:monot}
\subsection{Monotone stopping problems in discrete time and the Doob decomposition}\label{subsec:mono_discr}
Let us first stay in the realm of discrete time problems with infinite time horizon. For a sequence $X_1,X_2,\dots$ of integrable random variables,  adapted to a given filtration\ $(\A_n)_{n\in\N}$, we want to find a stopping time $\tau^*$ such that
\begin{align}\label{eq:OSP}
	EX_{\tau^*}=\sup_\tau EX_\tau.
\end{align}
Here $\tau$ runs through all stopping times such that $EX_\tau$ exists. We include a random variable $X_\infty$, so that the stopping times may assume the value $\infty$. A natural choice for our problems below is 
$X_\infty=\liminf_{n\rightarrow\infty}X_n,$
see the discussion in Subsection \ref{subsec:ass}. 

There is a certain class of such problems for which we can easily solve this.
Call the above problem a monotone case problem iff for all $n\in\N$ it holds that
\[E(X_{n+1}|\A_n)\leq X_n\;\; \implies E(X_{n+2}|\A_{n+1})\leq X_{n+1}.\]
Using sets in the notation this may be written as
\[\{E(X_{n+1}|\A_n)\leq X_n\}\subseteq \{E(X_{n+2}|\A_{n+1})\leq X_{n+1}\}\mbox{ for all }n\in\N. \]
A particularly simple sufficient condition for the monotone case is that the differences 
\[Y_n=E(X_{n+1}|\A_{n})-X_n\mbox{ 
	are non-increasing in $n$,}\]
	 hence 
	 \[Y_n\leq 0\; \implies Y_{n+1}\leq 0.\]
This condition turns out to be fulfilled in many examples of interest and allows for the treatment of multidimensional problems of sum type discussed in the following section. 

If we only want to consider stopping times $\geq k$, e.g. if stopping in $\{1,...,k-1\}$ is clearly suboptimal, then we may formulate the monotone case condition only for $n\geq k$. Of course, using $X_n'=X_{n+(k-1)}$, this may be subsumed in the case $k=1$, so we shall look at\ $n\geq 1$ in the following and, similarly in the latter continuous time case, at $t\geq0$.

\replaced{Comparing the current gain with what to expect in the next step leads to the stopping time
\[\tau^*=\inf\{n:X_n\geq E(X_{n+1}|\A_n)\}=\{n:Y_n\leq 0\}.\]
It is called the one-step look ahead rule or, as we will use in this paper, the myopic rule. In general, this does not yield an optimal rule, but in monotone case problems it is the natural candidate for an optimal one. 
 }{For monotone case problems, it is natural to consider the myopic stopping time, defined as ...}
The discussion of the optimality of $\tau^*$ \added{for the monotone case }is a well-known topic, see the references mentioned in the introduction.
We, however, find it enlightening to
  provide a short review using the Doob decomposition, which leads to a shortcut to optimality results without the usual machinery of optimal stopping theory. This approach also provides a unifying line of argument for both discrete and continuous time. 
 For every $n$ let
\begin{align*}
M_n&=\sum_{k=1}^{n-1}(X_{k+1}-E(X_{k+1}|\A_{k})),\,\,M_1=0,\\
A_n&=\sum_{k=1}^{n-1}(E(X_{k+1}|\A_{k})-X_k)=\sum_{k=1}^{n-1}Y_k,\,\,A_1=0,
\end{align*}
so that by telescoping expectation terms we have the Doob decomposition
\[X_n=X_1+M_n+A_n\]
with a zero mean martingale $(M_n)_{n\in\N}$. 
For the myopic stopping time $\tau^*$
\[E(X_{k+1}|\A_{k})-X_k>0\mbox{ for }k=1,\dots,\tau^*-1\]
-- valid for all $k\in\N$ if $\tau^*=\infty$ -- and 
in the monotone case
\[E(X_{k+1}|\A_{k})-X_k\leq 0\mbox{ for }k=\tau^*,\tau^*+1,\dots .\]
Thus we have
\[A_{\tau^*}=\sup_n A_n\]
and, using $\tau^*_L=\min\{\tau^*,L\}$ for $L\in\N$,
\[A_{\tau^*_L}=\sup_{n\leq L} A_n.\]
So, for any stopping time $\tau$, not necessarily finite a.s., 
\[A_\tau\leq A_{\tau^*},\;\; A_{\min\{\tau,L\}}\leq A_{\tau^*_L}.\]
	Basically these simple inequalities \replaced{are the foundation for}{lead to} the optimality properties of the myopic stopping time. We provide sufficient conditions\added{ for this optimality}, called \eqref{eq:V1} and \eqref{eq:V2}, which are suitable for our classes of problems, in Subsection \ref{append:doob_optimal}.

\begin{remark}\label{rem:finite_time}
It is remarkable that the myopic stopping rule immediately provides optimal stopping times for all possible time horizons in the monotone case. The same observation also holds true in the continuous time case discussed below. See also \cite{Ferguson} and \cite{doi:10.1080/07474946.2016.1275319}. This is in strong contrast to most Markovian-type optimal stopping problems, where infinite time problems are often easier to solve as the stopping boundary is not time dependent. 
\end{remark}


	\subsection{Optimality of The Myopic Rule Based on the Doob Decomposition}\label{append:doob_optimal}
We continue the treatment of Subsection \ref{subsec:mono_discr} and
assume that we are in the monotone case. 
\subsubsection{Optimality for finite time horizon}	\label{subsec:opt_finite_add}
Let\ $L\in\N$ and $\tau\leq L$ a bounded stopping time. Then, using the martingale property, $EM_\tau=0$, valid for bounded stopping times, 
\[EX_\tau=EX_1+EA_\tau\leq EX_1+EA_{\tau^*_L}=EX_{\tau^*_L}.\]
\replaced{This implies }{Hence, }optimality of $\tau^*_L$ for the finite time horizon $L$\deleted{ follows}. 

\subsubsection{Optimality for infinite time horizon} 
The extension from the finite to the infinite case uses the approximation $\tau=\lim_{L\rightarrow\infty}\tau_L,$ $\tau_L=\min\{\tau,L\},$ so that $X_\tau=\lim_{L\rightarrow\infty}X_{\tau_L}$ on\ $\{\tau<\infty\}$, but on $\{\tau=\infty\}$ we need a specific definition of $X_\infty.$ Here, we use 
\[X_\infty=\liminf_{n\rightarrow\infty}X_n,\]
 so that $X_\tau=\liminf_{L\rightarrow\infty}X_{\tau_L}$.
We introduce the following conditions:
\begin{equation}\label{eq:V1}\tag{V1}
\lim_{L\rightarrow\infty}EX_{\tau^*_L}\leq EX_{\tau^*}
\end{equation}
\begin{equation}\label{eq:V2}\tag{V2}
\sup\{EX_{\tau}:\tau\mbox{ bounded}\}=\sup_{\tau}EX_{\tau}
\end{equation}
\added{Note for \eqref{eq:V1} that $EX_{\tau^*_L}$ is increasing in $L$.}
\begin{proposition}\label{prop:opt_myopic_discr}
	Under \eqref{eq:V1} and \eqref{eq:V2}, $\tau^*$ is optimal, i.e.
	\[EX_{\tau^*}=\sup_{\tau}EX_\tau.\]
\end{proposition}
\begin{proof}
	We have from the previous considerations that
	\[EX_{\tau^*}\geq \lim_{L\rightarrow\infty}\sup_{\tau\leq L}EX_{\tau}=\sup\{EX_{\tau}:\tau\mbox{ bounded}\},\]
	proving the claim by \eqref{eq:V2}.
\end{proof}

\subsubsection{Discussion of Assumptions \eqref{eq:V1} and \eqref{eq:V2}}\label{subsec:ass}

The validity of \eqref{eq:V2} is of course a well-known topic in optimal stopping, independently of the monotone case context. We only remark that, under $E(\inf_n X_n)>-\infty$, Fatou's Lemma shows that for any $\tau$
\[\liminf_{L\rightarrow\infty}EX_{\tau_L}\geq E(\liminf_{L\rightarrow\infty}X_{\tau_L})=EX_\tau. \]
The same holds if we add costs of observation, e.g. $X_n=X_n'-cn$, assuming $E(\inf_n X_n')>-\infty$.

\eqref{eq:V1} follows from the condition $E(\sup_n X_n)<\infty,$ which is the standard assumption in optimal stopping theory\added{, see e.g. Theorem 4.5, 4.5' in \cite{CRS}}. This needs a short argument.
\begin{proposition}
	$E(\sup_n X_n)<\infty \added{\implies}$\deleted{implies} \eqref{eq:V1}.
\end{proposition}
\begin{proof}
 Using Fatou's Lemma again, we have
\[E\limsup_{L\rightarrow\infty}X_{\tau^*_L}\geq \limsup_{L\rightarrow\infty}EX_{\tau^*_L}.\]
Due to our definition of $X_\infty$ we have to show that $\limsup_{L\rightarrow\infty}X_{\tau^*_L}=\liminf_{L\rightarrow\infty}X_{\tau^*_L}$ on $\{\tau^*=\infty\}$. On this set, $(A_n)_{n\in\N}$ is increasing, hence $\lim_{n\rightarrow\infty}A_n$ exists. Furthermore, $(M_{\tau^*_n})_{n\in\N}$ is a martingale fulfilling the boundedness condition
\[M_{\tau^*_n}=X_{\tau^*_n}-X_1-A_{\tau^*_n}\leq \sup_n X_n+|X_1|,\]
since $A_{\tau^*_n}\geq 0.$ We may thus invoke the martingale convergence theorem and obtain the convergence of $M_{\tau^*_n}$ to some a.s. finite random variable. Since $\tau^*_n=n$ on $\{\tau^*=\infty\}$, this shows the convergence of\ $(M_n)_n$, hence of $(X_n)_n$, on this set. 
\end{proof}

\subsection{Monotone stopping problems in continuous time and the Doob-Meyer decomposition}\label{subsubsec:doob_cont}
To find the extension of the discrete time case to continuous time processes $(X_t)_{t\in[0,\infty)}$ we may use the Doob-Meyer decomposition. Under regularity assumptions, not discussed here, we have
\[X_t=X_0+M_t+A_t,\]
where $(M_t)_{t\in[0,\infty)}$ is a zero mean martingale and $(A_t)_{t\in[0,\infty)}$ is of locally bounded variation. Now assume that we may write 
\[A_t=\int_0^tY_sdV_s\]
where $(V_t)_{t\in[0,\infty)}$ is increasing. Then the myopic stopping time -- here often called infinitesimal look ahead rule -- becomes
\[\tau^*=\inf\{t:Y_t\leq 0\}.\]
In this situation, we say that the monotone case holds if 
\[Y_t\leq 0\mbox{ for }t>\tau^*.\]
If $(Y_t)_{t\in[0,\infty)}$ is non-increasing in $t$, then again the monotone case property is immediate. The discussion of optimality is essentially the same as in the discrete time case, so is omitted. As no confusion can occur, we keep the notations \eqref{eq:V1} and \eqref{eq:V2} for the continuous-time versions of the optimality conditions.


\subsection{Monotone stopping problems in discrete time and the multiplicative Doob decomposition}\label{subsec:mono_discr_mult}
For processes $(X_n)_{n\in\N}$ with $X_n>0$ we may also consider the multiplicative Doob decomposition
\[X_n=M_nA_n\]
where, with $X_0=1,\,\A_0=\{\emptyset,\Omega\}$,
\begin{align*}
M_n&=\prod_{k=1}^{n}\frac{X_k}{E(X_k|\A_{k-1})},\,n\geq 1,\mbox{ is a mean 1-martingale},\\
A_n&=\prod_{k=1}^{n}\frac{E(X_k|\A_{k-1})}{X_{k-1}},\,n\geq 1.
\end{align*}
Optimality of the myopic stopping time may also be inferred from this multiplicative decomposition in the monotone case. As in Subsection \ref{subsec:mono_discr}, we have for any $\tau$
\[A_\tau\leq A_{\tau^*}=\sup_n A_n,\;\; A_{\min\{\tau,L\}}\leq A_{\tau^*_L}.\]
The multiplicative decomposition leads, however, to different sufficient conditions for optimality. There is also a connection to a change of measure approach. Both is discussed in Subsection \ref{append:mult_optimal} below. 

We furthermore observe that we have a monotone case problem in particular if
\begin{equation*}
E\left(\frac{X_{n+1}}{X_n}\bigg|\A_n\right)\mbox{ is non-increasing in $n$},
\end{equation*}
which turns out to be a basis for the treatment of product-type problems in the following section.

\subsection{Optimality of The Myopic Rule based on the Multiplicative Decomposition}\label{append:mult_optimal}
We assume the setting of Subsection \ref{subsec:mono_discr_mult} and work under the assumption that we are in the monotone case.

\subsubsection{Optimality for finite time horizon}	
For any bounded stopping time $\tau\leq L$
\[EX_\tau =EM_{\tau}A_\tau=EM_LA_\tau\leq EM_LA_{\tau^*_L}=EX_{\tau_L^*},\]
so we arrive as in Subsection \ref{subsec:opt_finite_add} at
\[EX_{\tau^*_L}=\sup_{\tau\leq L}EX_\tau.\]

\subsubsection{Optimality for infinite time horizon}

To extend this argument to infinite time horizon, first note that, due to the positivity property of the $X_n$, \eqref{eq:V2} is valid due to the discussion in Subsection \ref{subsec:ass}. Condition \eqref{eq:V1} has to be taken care of for the specific problem at hand (and we know from Subsection \ref{subsec:ass} that\ $E(\sup_n X_n)<\infty$ is sufficient).

We now present a measure-change approach leading to another sufficient condition for optimality. We use a probability measure $Q$ such that $\frac{dQ|_{\A_n}}{dP|_{\A_n}}=M_n$ for each $n$, invoking the Kolmogorov extension theorem for the existence of $Q$. Then for any stopping time $\tau$
\[EX_\tau1_{\{\tau<\infty\}}=E_QA_\tau1_{\{\tau<\infty\}}.\]
\begin{proposition}
	Assume that
	\begin{equation}\tag{W1}\label{eq:W1}
	Q(\tau^*<\infty)=1.
	\end{equation}
	Then, $\tau^*$ is optimal, i.e. $EX_{\tau^*}=\sup_{\tau}EX_\tau$
\end{proposition}
\begin{proof} For any stopping time $\tau$
		\begin{align*}
	EX_\tau&\leq \liminf_{L\rightarrow\infty}EX_{\min\{\tau,L\}}=\liminf_{L\rightarrow\infty}E_QA_{\min\{\tau,L\}}\\
	&\leq E_QA_{\tau^*}=E_QA_{\tau^*}1_{\{\tau^*<\infty\}}\\
	&=EX_{\tau^*}1_{\{\tau^*<\infty\}}\leq EX_{\tau^*}.
	\end{align*}
\end{proof}
As \eqref{eq:W1} does not seem to be very handy for applications, we now give a sufficient condition. Using
\[Q(\tau^*>n)=\int_{\{\tau^*>n\}}M_ndP=\int_{\{\tau^*>n\}}\frac{X_n}{A_n}dP\leq  \int_{\{\tau^*>n\}}X_ndP, \]
we see that we obtain optimality for $\tau^*$ if
\[\int_{\{\tau^*>n\}}X_ndP\rightarrow0\mbox{ as }n\rightarrow\infty.\]
Note the similarities to the approach of Beibel and Lerche as presented, e.g., in \cite{BL} and \cite{lu}. There, in the continuous time case, the decomposition $X_t=A_tM_t$ and
$EX_\tau1_{\{\tau<\infty\}}=E_QA_\tau1_{\{\tau<\infty\}}$ is used for the case $A_t=g(Z_t)$ for some diffusion\ $Z$. Then, the stopping time 
\[\sigma^*=\inf\{t:g(Z_t)=\sup_zg(z)\}\]
has on $\{\sigma^*<\infty\}$ the property $A_{\sigma^*}=\sup_tA_t$, as the myopic stopping time. 

\subsection{Monotone stopping problems in continuous time and the multiplicative Doob-Meyer decomposition}\label{subsec:mono_cont_mult}
Also in continuous time, a multiplicative Doob-Meyer-type decomposition of the form 
\[X_t=M_tA_t\]
can be found in the case of a positive special semimartingale $X$, see \cite{jamshidian2007duality}. For the ease of exposition, we now concentrate on the case of continuous semimartingales to have more explicit formulas. Using ibid, Theorem 4.2, $M$ is a local martingale and if 
$(\int_0^tY_sdV_s)_{t\in[0,\infty)}$ denotes the process in the additive Doob-Meyer decomposition
as in\ Subsection \ref{subsubsec:doob_cont}, the process $A$\ here is given by
\[A_t=\exp\left(\int_0^t\frac{Y_s}{X_{s}}dV_s\right).\]
The optimality may be discussed as in the discrete time case. 
We again remark that the problem can be identified to be monotone in particular if the process 
\[\left(\frac{Y_t}{X_{t}}\right)_{t\in[0,\infty)}\mbox{ is non-increasing.}\] 

\section{Multidimensional Monotone Case Problems}\label{sec:multi}
We now come to the main point of this paper:\ Can we use the monotone case approach to find truly multidimensional stopping problems with explicit solutions? The answer is yes \replaced{as shown in the following section by several non-trivial examples.}{in so far, as we can present nontrivial examples in the following section.}

\subsection{The sum problem}
Already for sequences of real numbers,
$\sup_n(a_n+b_n)\leq \sup_na_n+\sup_nb_n,$
usually with strict inequality. For optimal stopping, 
\[\sup_\tau E(X^1_\tau+X^2_\tau)\leq \sup_\tau E(X^1_\tau)+\sup_\tau E(X^2_\tau),\]
with strict inequality as a rule; this means that being able to solve the stopping problems for $(X^1_n)_n$ and $(X^2_n)_n$ does not imply that we are able to solve the stopping problem for $(X^1_n+X^2_n)_n$.
\subsubsection{Discrete time  case}
Now let us look at $m$ sequences $(X^1_n)_{n\in\N},\dots, (X^m_n)_{n\in\N}$, adapted to a common filtration $(\A_n)_{n\in\N}$, with Doob decompositions
\[X_n^i=X_1^i+M_n^i+A_n^i,\;i=1,\dots,m,\]
where $A_n^i=\sum_{k=1}^{n-1}Y_k^i$ as in Subsection \ref{subsec:mono_discr}. Then, the Doob decomposition for the sum process is
\[\sum_{i=1}^mX_n^i=\sum_{i=1}^mX_1^i+\sum_{i=1}^mM_n^i+\sum_{k=1}^{n-1}\sum_{i=1}^mY_k^i,\;i=1,\dots,m.\]
Now, if for each $i$ the stopping problem for $(X^i_n)_{n\in\N}$ is a monotone case problem it does not necessarily follow that we have a monotone case problem for $\left(\sum_{i=1}^mX_n^i\right)_{n\in\N}$, see Example \ref{subsec:burglar} below. But in the special case that all the $(Y^i_k)_{k\in\N}$ are non-increasing in $k$ the monotone case property holds. We formulate this as a simple proposition:
\begin{proposition}\label{prop:sum_discr}
Assume that the processes $X^1,\dots,X^k$ have Doob decompositions
\[X_n^i=X_1^i+M_n^i+\sum_{k=1}^{n-1}Y^i_k,\;i=1,\dots,m,\] 
such that all the sequences $(Y^i_k)_{k\in\N}$ are non-increasing in $k$. Then:
\begin{enumerate}[(i)]
	\item The sum problem for $\left(\sum_{i=1}^mX_n^i\right)_{n\in\N}$ is a monotone case problem with myopic rule
	\[\tau^*=\inf\{k: \sum_{i=1}^mY^i_k\leq 0\}.\]
	\item If \eqref{eq:V1} and \eqref{eq:V2} hold for $\left(\sum_{i=1}^mX_n^i\right)_{n\in\N}$, then the myopic rule $\tau^*$ is optimal. 
\end{enumerate}
\end{proposition}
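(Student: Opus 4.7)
The plan is to exploit the linearity of the Doob decomposition and then reduce (ii) to the general optimality criteria of Appendix~\ref{append:doob_optimal}.

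For part (i), I would first take conditional expectations termwise and verify that the Doob decomposition of the sum $S_n:=\sum_{i=1}^m X_n^i$ is simply the sum of the individual decompositions, with martingale part $\sum_i M_n^i$ and predictable part $\sum_{k=1}^{n-1}\sum_i Y_k^i$. This is immediate from linearity of conditional expectation and the fact that a sum of martingales is a martingale. The predictable increments for the aggregated process are therefore $\tilde Y_k := \sum_{i=1}^m Y_k^i$. Since each $(Y_k^i)_{k\in\N}$ is assumed non-increasing in $k$, the sum $(\tilde Y_k)_{k\in\N}$ is also non-increasing in $k$. Invoking the sufficient condition for the monotone case stated in Subsection~\ref{subsec:mono_discr} (non-increasing $Y$-differences imply monotonicity), we conclude that the stopping problem for $(S_n)_{n\in\N}$ is a monotone case problem. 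The corresponding myopic rule is then, by definition,
\[
\tau^* \;=\; \inf\bigl\{k : S_k \geq E(S_{k+1}\mid \A_k)\bigr\} \;=\; \inf\bigl\{k : \tilde Y_k \leq 0\bigr\} \;=\; \inf\Bigl\{k : \sum_{i=1}^m Y_k^i \leq 0\Bigr\},
\]
which is exactly the claimed formula.

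For part (ii), once it has been established that $(S_n)_{n\in\N}$ is a monotone case problem and that the integrability / tail conditions \eqref{eq:V1} and \eqref{eq:V2} hold for $(S_n)_{n\in\N}$, there is essentially nothing new to prove: the general results from Appendix~\ref{append:doob_optimal} apply verbatim to the sum process, and they deliver the optimality of the myopic stopping time. So (ii) is a direct quotation of the appendix applied to $S_n$ in place of $X_n$.

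There is no real obstacle; the proposition is deliberately elementary. The only point that deserves a word of care is to emphasize why the hypothesis ``each $Y^i_k$ is non-increasing in $k$'' cannot be weakened to ``each individual problem is of monotone case'': monotonicity of each $X^i$ does not propagate to $\sum_i X^i$ in general, since summing implications of the form $E(X^i_{n+1}\mid\A_n)\le X^i_n\Rightarrow E(X^i_{n+2}\mid\A_{n+1})\le X^i_{n+1}$ is not a well-defined operation. The stronger pointwise monotonicity of $(Y_k^i)_k$ is what survives summation, and this is precisely the gap illustrated by Example~\ref{subsec:burglar}.
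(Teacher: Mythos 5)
Your argument is correct and follows exactly the paper's own proof: sum the individual Doob decompositions by linearity, observe that a sum of non-increasing sequences is non-increasing so the sufficient condition for the monotone case from Subsection~\ref{subsec:mono_discr} applies, read off the myopic rule, and reduce (ii) to Appendix~\ref{append:doob_optimal}. Your closing remark on why individual monotonicity does not suffice is a nice addition but is not needed for the proof itself.
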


\begin{proof}
	For the Doob decomposition
	\[\sum_{i=1}^mX_n^i=\sum_{i=1}^mX_1^i+\sum_{i=1}^mM_n^i+\sum_{k=1}^{n-1}\sum_{i=1}^mY_k^i,\;i=1,\dots,m,\]
	the sequence $\left(\sum_{i=1}^mY_k^i\right)_{k\in\N}$ is non-increasing in\ $k$ by assumption, yielding $(i)$.\\
	$(ii)$ now follows from $(i)$ using Proposition \ref{prop:opt_myopic_discr}.
\end{proof}


\subsubsection{Continuous time case}
Now let us look at $m$ continuous time processes $(X^1_t)_{t\in[0,\infty)},\dots, (X^m_t)_{t\in[0,\infty)}$, adapted to a common filtration $(\A_n)_{n\in\N}$, with Doob-Meyer decompositions
\[X_i^i=X_0^i+M_t^i+A_t^i,\;i=1,\dots,m,\]
where $A_t^i=\int_0^tY^i_sdV_s$ for an increasing $V$ independent of $i$. (The typical case is $dV_s=ds$.) Then, the Doob decomposition for the sum process is
\[\sum_{i=1}^mX_t^i=\sum_{i=1}^mX_0^i+\sum_{i=1}^mM_t^i+\int_0^t\sum_{i=1}^mY_s^idV_s,\;i=1,\dots,m,\]
so that for non-increasing $(Y^i_t)_{t\in[0,\infty)},\;i=1,\dots,m,$ the monotone case property holds. We obtain as in the discrete time case:

\begin{proposition}\label{prop:cont_sum}
	Assume that the processes $X^1,\dots,X^k$ have Doob decompositions
\[X_i^i=X_0^i+M_t^i+\int_0^tY_s^idV_s,\;i=1,\dots,m,\]
	such that all the processes $(Y^i_t)_{t\in[0,\infty)}$ are non-increasing in $t$. 
	\begin{enumerate}[(i)]
		\item The sum problem for $\left(\sum_{i=1}^mX_t^i\right)_{t\in[0,\infty)}$ is a monotone case problem with myopic rule
		\[\tau^*=\inf\{t: \sum_{i=1}^mY^i_t\leq 0\}.\]
		\item If \eqref{eq:V1} and \eqref{eq:V2} hold for $\left(\sum_{i=1}^mX_t^i\right)_{t\in[0,\infty)}$, then the myopic rule $\tau^*$ is optimal. 
	\end{enumerate}
\end{proposition}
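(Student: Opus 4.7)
The plan is to mirror the discrete-time proof of Proposition 3.1 almost verbatim, exploiting the additivity of the Doob-Meyer decomposition. First, I would sum the given individual decompositions to obtain
\[
\sum_{i=1}^m X^i_t \;=\; \sum_{i=1}^m X^i_0 \;+\; \sum_{i=1}^m M^i_t \;+\; \int_0^t \Bigl(\sum_{i=1}^m Y^i_s\Bigr)\, dV_s.
\]
A finite sum of zero-mean (local) martingales is again a zero-mean (local) martingale, and a finite sum of adapted integrands is adapted, so this is genuinely the Doob-Meyer decomposition of $(\sum_{i=1}^m X^i_t)_{t\in[0,\infty)}$, with exactly the same driving increasing process $V$ thanks to the hypothesis that $V$ is common to all coordinates.

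For part (i), I would simply note that a pointwise sum of non-increasing real-valued processes is non-increasing, so $(\sum_{i=1}^m Y^i_t)_{t\in[0,\infty)}$ is non-increasing in $t$. By the criterion stated at the end of Subsection 2.2, this immediately places the aggregated problem in the monotone case, and the associated infinitesimal look-ahead rule takes the announced form $\tau^* = \inf\{t : \sum_{i=1}^m Y^i_t \leq 0\}$.

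For part (ii), since the monotone case has been established in (i) and the sufficient conditions \eqref{eq:V1} and \eqref{eq:V2} are assumed to hold for the aggregated process $(\sum_{i=1}^m X^i_t)_{t\in[0,\infty)}$, I would invoke the optimality discussion of Subsection 2.2 (whose detailed continuous-time version is deferred to Appendix A) to conclude that $\tau^*$ is optimal. This is exactly the same shortcut used in the discrete-time Proposition 3.1(ii).

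The argument is essentially bookkeeping, and I do not anticipate a genuine obstacle: linearity of the additive Doob-Meyer decomposition, together with the trivial preservation of monotonicity under finite sums, makes the transition from the individual problems to the sum problem immediate. The only subtlety deserving explicit mention is that a \emph{single} integrator $V$ must govern every coordinate; if one allowed coordinate-dependent integrators $V^i$, the drift of the sum could no longer be written as $\int_0^t(\sum_i Y^i_s)\,dV_s$ and the scalar monotonicity of $\sum_i Y^i$ would not by itself suffice for the monotone case property.
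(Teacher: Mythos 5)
Your proof is correct and follows exactly the route the paper takes: sum the decompositions over the common integrator $V$, observe that a finite sum of non-increasing processes is non-increasing to get the monotone case, and defer optimality under \eqref{eq:V1} and \eqref{eq:V2} to the appendix discussion. Your closing remark on the necessity of a single integrator $V$ is a correct and worthwhile observation that the paper encodes only implicitly in the hypothesis.
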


\begin{remark}\label{rem:prod}
	The assumptions of the previous Propositions can obviously be relaxed by assuming that the processes $Y^i$ are of the form
	\[Y^i=B\tilde Y^i,\]
	where $\tilde Y^i$ is a non-increasing process and $B>0$ is a process independent of $i$.
\end{remark}

\subsection{The product problem}\label{prod_problem}
\subsubsection{Discrete time case}
Now let us again consider $m$ positive sequences $(X^1_n)_{n\in\N},\dots, (X^m_n)_{n\in\N}$, which we now assume to be independent. We are interested in the product problem with gain $X_n=\prod_{i=1}^nX^i_n,\,n\in\N.$ In this case\deleted{, it holds that}
\[E\left(\frac{X_{n+1}}{X_n}\bigg|\A_n\right)=\prod_{i=1}^mE\left(\frac{X_{n+1}^i}{X_n^i}\bigg|\A_n\right).\]
So, if in the special individual monotone case problems\deleted{,  it holds that}
\begin{equation}\label{eq:monot_mult}
E\left(\frac{X_{n+1}^i}{X_n^i}\bigg|\A_n\right)\mbox{ is non-increasing in $n$},
\end{equation}
then this also holds for the product. By noting that \eqref{eq:V2} is fulfilled automatically by the positivity, we obtain

\begin{proposition}\label{prop:discr_prod}
	Assume that the processes $X^1,\dots,X^k$ are positive, independent, and have multiplicative Doob decomposition
	\[X_n^i=M_n^i\prod_{j=1}^{n}\frac{E(X_j^i|\A_{j-1})}{X_{j-1}^i},\;i=1,\dots,m,\]
	such that \eqref{eq:monot_mult} holds true. Then:
	\begin{enumerate}[(i)]
		\item The product problem for $\left(\prod_{i=1}^mX^i_n\right)_{n\in\N}$ is a monotone case problem with myopic rule
		\[\tau^*=\inf\left\{n: \prod_{i=1}^mE\left(\frac{X_{n+1}^i}{X_n^i}\bigg|\A_n\right)\leq 1\right\}.\]
		\item If \eqref{eq:V1} holds for $\left(\prod_{i=1}^mX^i_n\right)_{n\in\N}$, then the myopic rule $\tau^*$ is optimal. 
	\end{enumerate}
\end{proposition}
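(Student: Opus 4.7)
The plan is to mimic the proof of Proposition~3.1 but work with the multiplicative Doob decomposition from Subsection~\ref{subsec:mono_discr_mult}. Since each $X^i$ is nonnegative, the product $X_n=\prod_{i=1}^m X^i_n$ is nonnegative and therefore possesses a multiplicative Doob decomposition in the sense of Subsection~\ref{subsec:mono_discr_mult}, with the relevant ``drift'' quantity
\[
E\left(\frac{X_{n+1}}{X_n}\bigg|\A_n\right).
\]
First I would invoke the independence of the $X^i$ (and the fact that, in this setting, $\A_n$ is generated by the individual histories) to split this conditional expectation into a product, as already displayed in the text just before the proposition:
\[
E\left(\frac{X_{n+1}}{X_n}\bigg|\A_n\right)=\prod_{i=1}^m E\left(\frac{X_{n+1}^i}{X_n^i}\bigg|\A_n\right).
\]

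Next I would argue that each factor on the right is a nonnegative, $\A_n$-measurable quantity which, by assumption \eqref{eq:monot_mult}, is non-increasing in $n$. A product of finitely many nonnegative non-increasing sequences is non-increasing, so the left-hand side is non-increasing in $n$ as well. By the observation in Subsection~\ref{subsec:mono_discr_mult}, this gives the monotone case property for the product process, and the myopic stopping time is precisely
\[
\tau^*=\inf\left\{n:E\left(\frac{X_{n+1}}{X_n}\bigg|\A_n\right)\leq 1\right\}=\inf\left\{n:\prod_{i=1}^m E\left(\frac{X_{n+1}^i}{X_n^i}\bigg|\A_n\right)\leq 1\right\},
\]
which establishes~(i).

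For~(ii), I would appeal to the optimality result from Appendix~\ref{append:mult_optimal}: in the monotone case, under conditions \eqref{eq:V1} and \eqref{eq:V2}, the myopic rule is optimal. Since $X_n\geq 0$, the text preceding the proposition already notes that \eqref{eq:V2} is automatic in the multiplicative setting, so it suffices to assume \eqref{eq:V1} as in the statement. I don't expect any single step to be a serious obstacle; the only point requiring care is the use of independence to factor the conditional expectation, which relies on $\A_n$ being the product filtration generated by the individual components (this is the standing assumption throughout Section~\ref{sec:multi}). Everything else is a direct translation of the sum-problem argument to the multiplicative Doob decomposition.
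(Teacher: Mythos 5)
Your proposal is correct and follows essentially the same route as the paper, which presents exactly this argument in the text immediately preceding the proposition: independence factorizes $E(X_{n+1}/X_n\mid\A_n)$ into the individual ratios, condition \eqref{eq:monot_mult} plus nonnegativity makes the product non-increasing, \eqref{eq:V2} is automatic by positivity, and \eqref{eq:V1} then yields optimality via Appendix \ref{append:mult_optimal}. No issues.
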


\subsubsection{Continuous time case}
The same argument as in the discrete case yields
\begin{proposition}\label{prop:cont_prod}
	Assume that the processes $X^1,\dots,X^k$ are positive, independent  semimartingales, and have multiplicative Doob-Meyer decomposition
	\[X_t^i=M_t^i\exp\left(\int_0^t\frac{Y_s^i}{X_{s}^i}dV_s\right),\;i=1,\dots,m,\]
	such that $\left(\frac{Y_t^i}{X_{t}^i}\right)_{t\in[0,\infty)}$ is non-increasing in $t$ for $i=1,\dots,m$. Then:
	\begin{enumerate}[(i)]
		\item The product problem for $\left(\prod_{i=1}^mX^i_t\right)_{t\in[0,\infty)}$ is a monotone case problem with myopic rule
		\[\tau^*=\inf\left\{t: \prod_{i=1}^m\exp\left(\int_0^t\frac{Y_s^i}{X_{s}^i}dV_s\right)\leq 1\right\}=\inf\left\{t: \sum_{i=1}^m\int_0^t\frac{Y_s^i}{X_{s}^i}dV_s\leq 0\right\}.\]
		\item If \eqref{eq:V1} holds for $\left(\prod_{i=1}^mX^i_t\right)_{t\in[0,\infty)}$, then the myopic rule $\tau^*$ is optimal. 
	\end{enumerate}
\end{proposition}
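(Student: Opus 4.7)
The plan is to follow the same template as the proof of Proposition \ref{prop:cont_sum}, only with the multiplicative Doob--Meyer decomposition of Subsection \ref{subsec:mono_cont_mult} in place of the additive one. The key observation is that independence of the factors lets us assemble the multiplicative decomposition of the product from those of the individual $X^i$'s, after which everything reduces to the criterion flagged at the end of Subsection \ref{subsec:mono_cont_mult}.

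First, I would use independence to compute the multiplicative Doob--Meyer decomposition of $\prod_{i=1}^m X_t^i$. By independence of the $X^i$'s under the common filtration, the product $\prod_{i=1}^m M_t^i$ of the individual mean-one local martingales is again a local martingale, while
\[\prod_{i=1}^m \exp\!\left(\int_0^t \frac{Y_s^i}{X_s^i}\,dV_s\right)=\exp\!\left(\int_0^t \sum_{i=1}^m \frac{Y_s^i}{X_s^i}\,dV_s\right).\]
Combining these two observations gives
\[\prod_{i=1}^m X_t^i \;=\; \Bigl(\prod_{i=1}^m M_t^i\Bigr)\exp\!\left(\int_0^t \sum_{i=1}^m \frac{Y_s^i}{X_s^i}\,dV_s\right),\]
which is the multiplicative Doob--Meyer decomposition of the product in the sense of Subsection \ref{subsec:mono_cont_mult}.

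For part (i), the integrand driving the finite-variation factor of the product is $\sum_{i=1}^m Y_s^i/X_s^i$. Since each summand is non-increasing in $s$ by hypothesis, so is the sum, and the sufficient condition from the end of Subsection \ref{subsec:mono_cont_mult} puts the product problem in the monotone case. The myopic stopping time is then obtained, exactly as in the discrete Proposition \ref{prop:discr_prod}, by asking for the first time the instantaneous multiplicative drift of $\prod_i X_t^i$ reaches its critical level, yielding the $\tau^*$ as stated. For part (ii), the product is nonnegative, so \eqref{eq:V2} is automatic (as noted just before Proposition \ref{prop:discr_prod}), and assumption \eqref{eq:V1} together with the optimality criterion of Appendix \ref{append:mult_optimal} promotes $\tau^*$ to an optimal stopping time.

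The point I would pay most attention to is the assembly of the multiplicative decomposition in the first step: under the common filtration $(\A_t)$, one must verify that independence really does transfer the local-martingale property to $\prod_i M^i_t$ and that the resulting decomposition coincides with the one provided by Theorem 4.2 of \cite{jamshidian2007duality}. The remaining two steps are then direct translations from the sum case and from the discrete multiplicative case.
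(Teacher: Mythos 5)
Your proposal matches the paper's own (essentially unwritten) argument: the paper simply says ``the same argument as in the discrete case yields'' the result, meaning that independence lets the multiplicative decomposition of the product be assembled factorwise and that the sum of the non-increasing integrands $Y^i_s/X^i_s$ is again non-increasing, which is exactly what you carry out. One small point: your own derivation correctly yields the myopic rule $\tau^*=\inf\{t:\sum_{i=1}^m Y^i_t/X^i_t\le 0\}$, so you should not defer to the form $\prod_{i=1}^m Y^i_t/X^i_t\le 1$ printed in the statement, which is evidently a misprint carried over from the discrete-time product criterion.
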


\section{Examples}\label{sec:ex}
\subsection{The multidimensional house-selling problem}
\subsubsection{Sum problem}
\begin{enumerate}[(i)] 
	\item \emph{With recall:} 
Consider $m$ independent i.i.d. integrable sequences $(Z^1_n)_{n\in\N},\dots,(Z^m_n)_{n\in\N}$ and let for $i=1,\dots,m$ 
\[X^i_n=\max\{Z^i_1,\dots,Z^i_n\}-cn,\;c>0.\]
In the terminology of the house-selling problem we have houses $i=1,\dots,m$ to sell with gain $X_n^i$ when selling it at time $n$ describing a seller's market, the max stating that former offers may be used. It is well-known, see \cite{CRS}, that this is a monotone case problem with
\[Y^i_k=E(\max\{Z^i_1,\dots,Z^i_{k+1}\}|\A_k)-\max\{Z^i_1,\dots,Z^i_k\}-c=f_i(S^i_k)-c\]
where
\[S^i_k=\max\{Z^i_1,\dots,Z^i_k\},\;f_i(z)=E((Z^i_1-z)^+).\]
The filtration $(\A_n)_{n\in\N}$ is, of course, the one generated by the independent sequences. Since the $f_i$ are non-increasing in\ $z$ and the $S^i_k$ are non-decreasing in\ $k$, the processes $(Y^i_k)_{k\in\N}$ are non-increasing in $k$. Proposition \ref{prop:sum_discr} yields that the multidimensional sum problem with reward
\[X_n=\sum_{i=1}^{m}\max\{Z^i_1,\dots,Z^i_n\}-cn \]
is a monotone case problem, and the myopic stopping time
\[\tau^*=\inf\{k: \sum_{i=1}^mf_i(S^i_k)\leq c\}=\inf\{k: (S^1_k,...,S^m_k)\in S_m^*\}\]
with
\[S_m^*=\{(z_1,...,z_m):\sum_{i=1}^mf_i(z_i)\leq c\}\]
is optimal under the additional condition of finite variance for $Z_n^i$. The validity of \eqref{eq:V1} and \eqref{eq:V2} follows as in the univariate case, see \cite{Ferguson}, Appendix to Chapter 4, Theorem 1.

This problem was already solved by \cite{bruss1997multiple}, using the monotone case property for the sum problem in this example. Their treatment includes the validity of \eqref{eq:V1} and \eqref{eq:V2}, and also explicit solutions for the uniform distribution where $f_i(z)=\frac{1}{2}(1-z)^2,\,z\in[0,1],$ and the exponential distribution with $f_i(z)=e^{-z}$, $z\in(0,\infty)$. Then
		\[S_m^*=\{(z_1,\dots,z_m):\sum_{i=1}^m(1-z_i)^2\leq 2c\},\mbox{ resp. }=\{(z_1,\dots,z_m):\sum_{i=1}^me^{-z_i}\leq c\}.\]
	In the house-selling problem, the functions $f_i$ are decreasing and convex. In the two examples above, we assumed identical distributions, so that\ $f_i=f$ are independent of $i$ and the stopping sets are symmetrical. Of course, this will disappear for non-identical distributions.
If the underlying distributions are discrete then the $f_i$ will be piecewise linear and the optimal stopping sets polyhedrons.
\item \emph{Without recall:} In the house-selling problem without recall we have 
\[X_n^i=Z_n^i-cn,\]
which is not a monotone case problem. But this one-dimensional problem has the same solution, i.e. optimal stopping time and optimal value as the problem with recall. This is well-known and follows from 
\[Z_n^i\leq \max\{Z_1^i,...,Z_n^i\}\mbox{ and }Z_{\tau^*}^i=\max\{Z_1^i,...,Z_{\tau^*}^i\}.\]
Looking at the sum problem without recall we have
\[X_n=\sum_{i=1}^mZ_n^i-cn\]
which again is not a monotone case problem. But clearly, this is not a truly multidimensional problem as it reduces to the one-dimensional problem for $\tilde{Z}_n=\sum_{i=1}^mZ_n^i.$ Using\ $\tilde{f}(z)=E((\tilde Z_1-z)^+)$, the optimal stopping time is given by
\[\tilde{\tau}^*=\inf\{k:\tilde f(\tilde Z_n)\leq c\}=\inf\{k:(Z^1_k,...,Z^m_k)\in\tilde S_m\} \]
with
\[\tilde S_m=\{(z_1,..,z_m):z_1+...+z_m\geq \tilde f^{-1}(c)\}, \]
thus a different geometric structure than in the problem with recall. For the explicit computation of $\tilde f$, an explicit expression for the distribution of $Z_n$ is necessary. As a simple example we take two uniform distributions to arrive at 
\[\tilde f(z)=\frac{4}{3}+z^2-2z-\frac{z^3}{6},\,1\leq z\leq 2.\]
\item Since
\[\tilde f(z_1+...+z_m)=E\left(\left(\sum_{i=1}^m(Z_1^i-z_i)\right)^+\right)\leq \sum_{i=1}^m E((Z_1^i-z_i)^+)=\sum_{i=1}^m f_i(z_i) \]
it follows that
\[S_m^*\subseteq \tilde S_m, \]
see Figure \ref{fig:house_sum}. However, 
no ordering between $\tilde{\tau}^*$ and $\tau^*$ may be inferred, as they are entrance times for two different stochastic sequences.
\begin{figure}[ht]
	\centering
	\includegraphics[width=5cm,height=5cm]{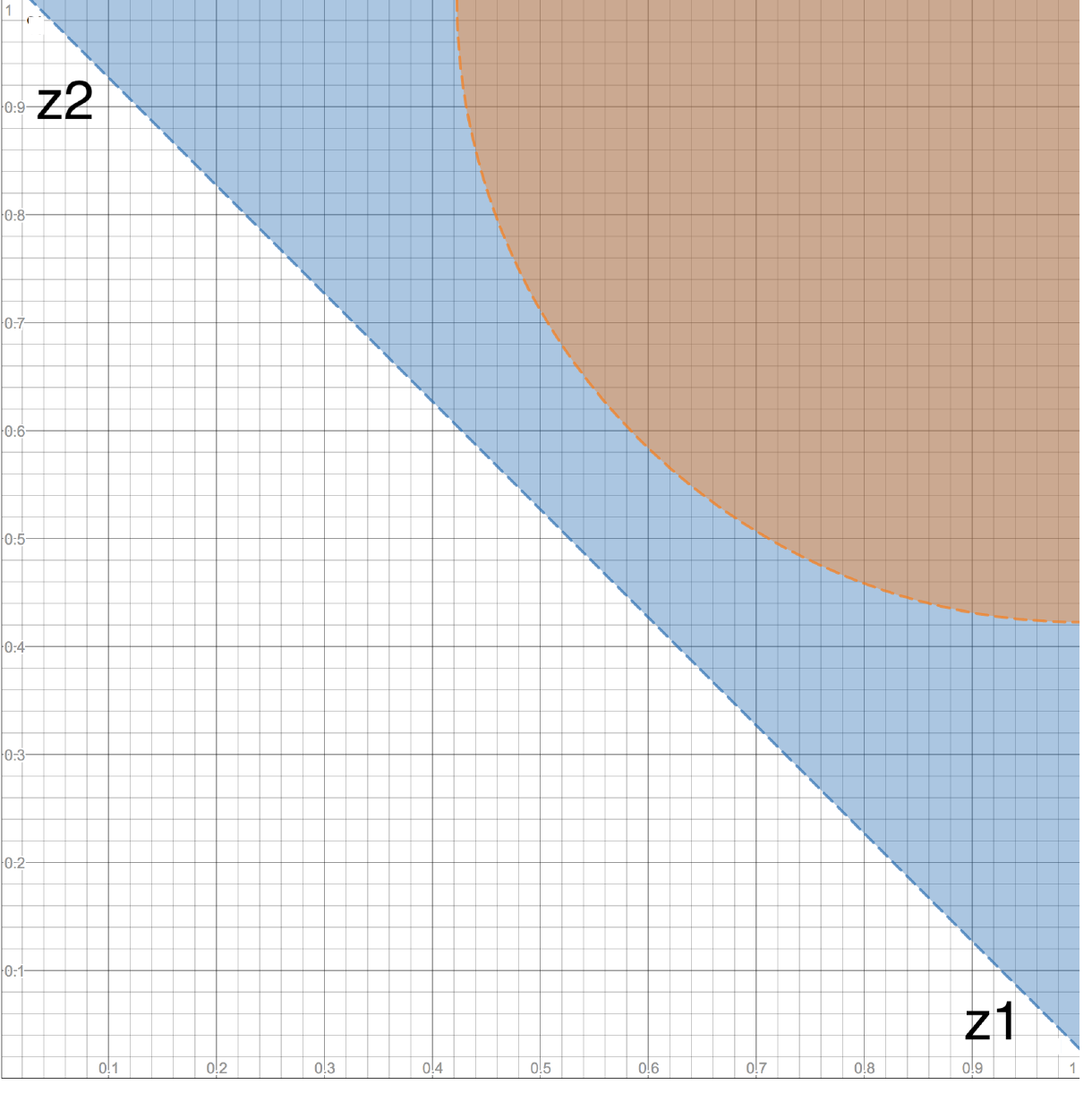}
	\caption{Stopping sets $S_m^*$ (red, with recall) and $\tilde S_m$ (blue\added{ and red}, without recall) \added{for $(Z^1,Z^2)$} in the multidimensional sum house-selling problem for uniform distributions and $c=1/6$.}\label{fig:house_sum}
\end{figure}
\end{enumerate}

\subsubsection{Product problem}
Another multidimensional version of the house-selling problem is the product problem with constant costs, that is, with reward
\[\prod_{i=1}^m\max\{Z^i_1,\dots,Z^i_n\}-cn.\]
\replaced{Looking at possible applications for the product structure, prices of houses do not provide an appropriate setting. We look at the selling of other assets, keeping the traditional notion of the house-selling problem, although \emph{asset-selling problem} might be more appropriate. Stopping problems with product structure over various assets arise in American option pricing of financial engineers, and we sketch a possible case in the following lines. Consider}
{ As an illustration consider} (with recall) a selling problem for a commodity together with a currency conversion. Here $m=2$ and the first factor gives the obtainable prices for the commodity in currency $A$, the second factor gives the obtainable exchange rates from currency $A$ to currency $B$, and the product gives the prices in currency $B$.

\replaced{In the product problem with constant costs it can straightforwardly be checked that this does not lead to a monotone case problem. }{It can, however, straightforwardly be checked that this does not lead to a monotone case problem. }
We now modify the classical problem by using a discounting factor $\rho\in(0,1)$\added{ which is also appropriate for the financial engineering setting}. More precisely, for $(Z^1_n)_{n\in\N},\dots,(Z^m_n)_{n\in\N}$ as above with $Z^i_n>0$ a.s., let for $i=1,\dots,m$ 
\[X^i_n=\rho^n\max\{Z^i_1,\dots,Z^i_n\}.\]
Then,
\begin{align*}
E\left(\frac{X_{k+1}^i}{X_k^i}\bigg|\A_k\right)&=\rho g_i(S^i_k)
\end{align*}
where
\[S^i_k=\max\{Z^i_1,\dots,Z^i_k\},\;g_i(z)=E\left(\max\left\{1,\frac{Z^i_1}{z}\right\}\right).\]
Similar as for the sum problem, the $g_i$ are decreasing in\ $z$ and the $S^i_k$ are non-decreasing in\ $k$, so that the processes $E\left(\frac{X_{k+1}^i}{X_k^i}\bigg|\A_k\right)$ are non-increasing in $k$. Therefore, the multidimensional product problem with gain
\[X_n=\prod_{i=1}^{m}X^i_n \]
is a monotone case problem and the myopic stopping time reads as
\[\tau^*=\inf\left\{k: \prod_{i=1}^mg_i(S^i_k)\leq \rho^{-m}\right\}.\]
It is not difficult to see that $\tau^*$ is optimal according to Proposition \ref{prop:discr_prod}. Indeed, \eqref{eq:V2} is clear due to the non-negativity and for \eqref{eq:V1} it can be checked that $E(\sup_n X_n^i)<\infty$ for all integrable $Z^i_n$, see e.g. \cite{Ferguson}, Chapter 4, Section 4.7. 
We provide the short argument: Since
\[E\sup_n X_n\leq E\left(\prod_{i=1}^m\sup_n X_n^i\right)=\prod_{i=1}^mE\left(\sup_n X_n^i\right) \]
it is enough to consider $m=1$. Then
\[E\sup_n X_n^1\leq E\left(\sup_n \max\{\rho Z^1_1,\dots,\rho^nZ^1_n\}\right)\leq E\left(\sum_{n=1}^\infty\rho^nZ^1_n\right)=\frac{\rho}{\rho-1}EZ^1_1<\infty.\]

As for the sum problem, we obtain an explicit optimal stopping rule when considering concrete  distributions. For example, consider the case that all $Z^i_n$ are uniformly distributed on $[0,1]$, then
\[g_i(z)=g(z)=\int_0^1\max\left\{1,\frac{u}{z}\right\}du=\frac{1+z^2}{2z},\]
so
\[\tau^*=\inf\{k: (S^1_k,\dots,S^m_k)\in S_m\},\]
where 
\[S_m=\left\{(z_1,\dots,z_m):\prod_{i=1}^m\frac{1+z_i^2}{z_i}\leq \left(\frac{\rho}{2}\right)^{-m}\right\}.\]
See Figure \ref{fig:house_prod} for an illustration.

\begin{figure}[ht]
	\centering
	\includegraphics[width=5cm,height=5cm]{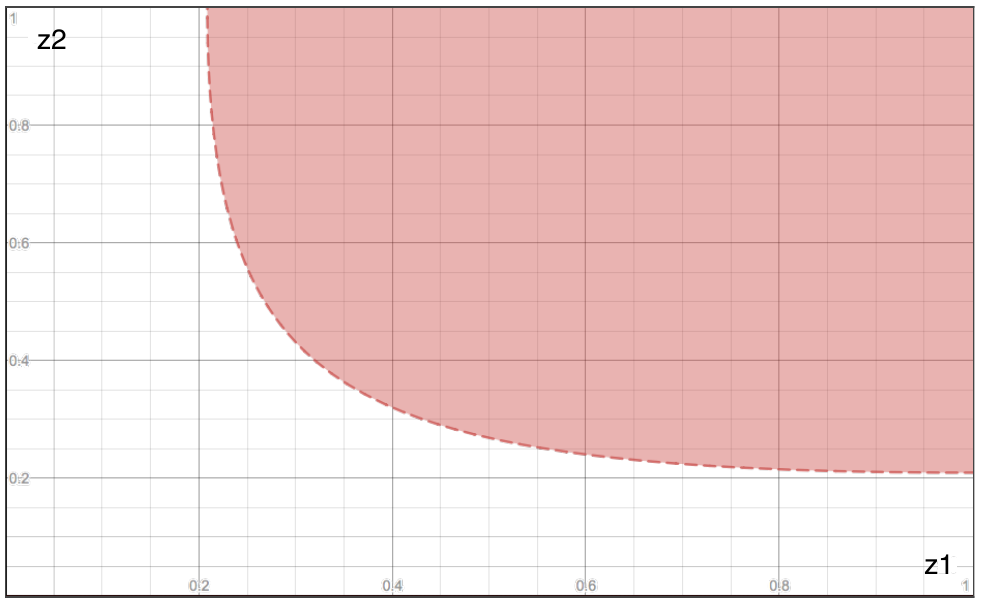}
	\caption{Stopping set \added{$S_m$ (red) for $(S^1,S^2)$} in the multidimensional product house-selling problem for uniform distributions.}\label{fig:house_prod}
\end{figure}

\begin{remark}

		In the house-selling problem, as well as in other stopping problems, one might want to also study a problem of $\max$-type, that is the problem with gain
		\begin{align*}
		X_n&=\max_{i=1,\dots,m}X_n^i-cn\\
		&=\max_{i=1,\dots,m}\max\{Z_1^i,\dots,Z^i_n\}-cn\\
		&=\max_{l=1,\dots,n}\max_{i=1,\dots,m}Z_l^i-cn.
		\end{align*}
		So, this is not a truly multidimensional problem as it boils down to a one-dimensional problem for $\tilde Z_n:= \max_{i=1,\dots,m}Z_n^i$. But in general, it seems harder to work with max-type problems than with sum problems due to the nonlinearity of the max function. 

\end{remark}

\subsection{The multidimensional burglar's problem}\label{subsec:burglar}
\subsubsection{Sum problem}
Here, we have for $i=1,\dots,m$ independent i.i.d. sequences $(Z^i_n)_{n\in\N}$ and $(\delta^i_n)_{n\in\N}$,  where $Z^i_n\geq 0$ describes the burglar's gain and $\delta^i_n=0$ or $=1$ when getting caught or not caught, resp. Then, we look at 
\[X^i_n=\left(\sum_{j=1}^nZ_j^i\right)\prod_{j=1}^{n}\delta^i_j\]
with obvious interpretation. The sum problem corresponds to the question when a burglar gang should stop their work. It is well-known that for each $i$ we have a monotone case problem. Indeed, writing $p^i=E\delta^i,\;a^i=EZ^i_1$ it holds that
\begin{align*}
Y^i_k&=E\left(\left(\sum_{j=1}^kZ^i_j+Z^i_{k+1}\right)\prod_{j=1}^k\delta^i_j\delta^i_{k+1}\bigg|\A_k\right)-X^i_k\\
&=X^i_kp^i+\left(\prod_{j=1}^k\delta^i_j\right)a^ip^i-X^i_k=X^i_k(p^i-1)+\left(\prod_{j=1}^k\delta^i_j\right)a^ip^i,
\end{align*}
hence
$Y^i_k\leq 0$ iff 
\[\prod_{j=1}^k\delta^i_j=0\mbox{ or }\sum_{j=1}^kZ_j^i\geq \frac{a^ip^i}{1-p^i}.\]

Let us first look at the sum problem for $m=2$ and constant $p^i=p,\,a^i=a$. Then,
\[Y^1_k+Y^2_k=(X^1_k+X^2_k)(p-1)+ \left(\left(\prod_{j=1}^k\delta^1_j\right)+\left(\prod_{j=1}^k\delta^2_j\right)\right)ap.\]
If $\prod_{j=1}^k\delta^1_j=1=\prod_{j=1}^k\delta^2_j$, this becomes
\[\sum_{j=1}^k(Z_j^1+Z_j^2)(p-1)+2ap, \]
hence 
\[Y^1_k+Y^2_k\leq 0 \mbox{ iff }\sum_{j=1}^k(Z_j^1+Z_j^2)\geq \frac{2ap}{1-p}.\]
But if, e.g., the next $\delta^1_{k+1}=1,\,\delta^2_{k+1}=0,$ then
\[Y^1_{k+1}+Y^2_{k+1}=\left(\sum_{j=1}^{k+1}Z_j^1\right)(p-1)+ ap\]
and $\left(\sum_{j=1}^{k+1}Z_j^1\right)\geq \frac{ap}{1-p}$ does not hold true in general. So, the sum problem is not monotone in general. 

In the case that $(\delta_n)_{n\in\N}=(\delta^i_n)_{n\in\N}$ is independent of $i$ -- that is the police takes away all stolen goods when catching one member of the gang -- the problem for 
\[\sum_{i=1}^m X_n^i=\left(\sum_{j=1}^n\sum_{i=1}^m Z^i_j \right)\prod_{j=1}^k\delta_j \]
is simply the one-dimensional case for
\[\tilde Z_j=\sum_{i=1}^m Z^i_j.\]

\subsubsection{Product problem}
We now consider the product version of the multidimensional burglar's problem. \added{Looking at possible applications we may turn to the pricing of American options in financial engineering. Here we may have asset prices together with exchange rates and default variables $\delta$, a bankruptcy replacing being caught. }We could directly apply Proposition \ref{prop:discr_prod}, but we want to cover a slightly more general case including geometric averages of the gains:
\[X_n=\prod_{i=1}^m (S_n^i\beta_n^i)^{\alpha_i},\;\alpha_i>0 \]
with
\[S_n^i=\sum_{j=1}^nZ_j^i,\;\beta_n^i=\prod_{j=1}^{n}\delta_j^i, \]
so
\[X_n=\prod_{i=1}^m {S_n^i}^{\alpha_i}\prod_{i=1}^m\beta_n^i,\;X_{n+1}=\prod_{i=1}^m (S_n^i+Z^i_{n+1})^{\alpha_i}\prod_{i=1}^m(\beta_n^i\delta_{n+1}^i). \]
Using
$\lambda=\prod_{i=1}^mp^i$ it follows
\[E(X_{n+1}|\A_n)=\lambda\prod_{i=1}^m\beta_n^i\prod_{i=1}^m\int(S_n^i+z)^{\alpha_i}P^{Z^i_1}(dz)\]
so that $E(X_{n+1}|\A_n)\leq X_n$ holds iff
\[\prod_{i=1}^m\beta_n^i=0\mbox{ or }\lambda\prod_{i=1}^m\int(S_n^i+z)^{\alpha_i}P^{Z^i_1}(dz)\leq \prod_{i=1}^m {S_n^i}^{\alpha_i}.\]
So under $\prod_{i=1}^m\beta_n^i=1$, the inequality to be considered becomes
\[\prod_{i=1}^m\int\left(1+\frac{z}{S_n^i}\right)^{\alpha_i}P^{Z^i_1}(dz)\leq \frac{1}{\lambda}. \]
Since $S_n^i$ is non-decreasing in $n$, we have a monotone case problem and the myopic stopping time is the the first entrance time for the $m$-dimensional random walk $\left(S_n^1,\dots,S_n^m\right)_{n\in\N}$ into the set
\[S_m=\left\{(y_1,\dots,y_m):\prod_{i=1}^mh_i(y_i)\leq \frac{1}{\lambda}\right\}\]
with
\[h_i(y)=\int\left(1+\frac{z}{y}\right)^{\alpha_i}P^{Z^i_1}(dz).\]
The optimality of the myopic stopping time follows as in the univariate case; see Proposition \ref{prop:discr_prod} and \cite{Ferguson}, 5.4.

\subsection{The multidimensional Poisson disorder problem}
The classical Poisson disorder problem is a change point-detection problem where the goal is to determine a stopping time $\tau$ which is as close as possible to the unobservable time $\sigma$ when the intensity of an observed Poisson process changes its value. Early treatments include \cite{MR0297028}, \cite{davis1976note}, and a complete solution was obtained in \cite{PS02}. Further calculations can be found in \cite{MR2158013}. 

Our multidimensional version of this problem is based on observing $m$ such independent processes with different change points $\sigma^1,\dots,\sigma^m.$ The aim is now to find one time $\tau$ which is as close as possible to the unobservable times $\sigma^1,\dots,\sigma^k.$
We now give a precise formulation. For each $i$, the unobservable random time $\sigma^i$ is assumed to be exponentially distributed with parameter $\lambda^i$ and the corresponding observable process $N^i$ is a counting process whose intensity switches from a constant $\mu_0^i$ to $\mu_1^i$ at $\sigma^i$. Furthermore, all random variables are independent for different $i$. We denote by $(\mathcal{F}_t)_{t\in[0,\infty)}$ the filtration given by
\[\mathcal{F}_t=\sigma(N^i_s,1_{\{\sigma^i\leq s\}}:s\leq t,\,i=1,\dots,m).\]
As $\sigma_i$ is not observable, we have to work under the subfiltration $(\mathcal{A}_t)_{t\in[0,\infty)}$ generated by $(N^1_t,\dots,N^m_t)_{t\in[0,\infty)}$ only. If we stop the process at $t$, a measure to describe the distance of $t$ and $\sigma^i$ often used in the literature is 
\[Z^i_t=1_{\{\sigma^i\geq t\}}+c_i\,(t-\sigma^i)^+\]
for some constant $c_i>0$. We also stay in this setting, although a similar line of reasoning could be applied for other gain functions also. As $Z^i$ is not adapted to the observable information $(\mathcal{A}_t)_{t\in[0,\infty)}$, we introduce the processes $X^1,\dots,X^m$ by conditioning as
\[X^i_t=E(Z^i_t|\A_t).\]
The classical Poisson disorder problem for $m=1$ is the optimal stopping problem of $Z^1$ over all $(\mathcal{A}_t)_{t\in[0,\infty)}$-stopping times $\tau$. Here, of course, we want to minimize (and not maximize) the expected distance, so that we have to make the obvious minor changes in the theory. 

We now study the corresponding problem for the sum process
\[\sum_{i=1}^mX_t^i=E\left(\sum_{i=1}^m(1_{\{\sigma^i\geq t\}}+c_i\,(t-\sigma^i)^+)\big|\A_t\right),\;t\in[0,\infty).\]
Here, $\sum_{i=1}^m(1_{\{\sigma^i\geq t\}}+c_i\,(t-\sigma^i)^+)$ denotes the number of processes without a change before $t$ plus a weighted sum of the  cumulated times that have passed by since the other processes have changed their intensity. 

A possible application is a technical system consisting of $m$ components. Component $i$ changes its characteristics at a random time $\sigma^i$. After these changes, the component produces additional costs of $c_i$ per time unit. $\tau$ denotes a time for maintenance. Inspecting component $i$ before $\sigma^i$ produce (standardized) costs 1. Then, the optimal stopping problem corresponds to the following question:\ What is the best time for maintenance in this technical system?


The Doob-Meyer decomposition for $X^i,i\,=1,\dots,m,$ can explicitly be found in 
\cite{PS02}, (2.14), 
 and is given by
\[X_t^i=X_0^i+M_t^i+\int_0^tY^i_sds,\]
where 
\[Y^i_t=-\lambda^i+(c^i+\lambda^i)\pi^i_t\]
 and $\pi^i_t$ denotes the posterior probability process
\[\pi^i_t=P(\sigma^i\leq t|\A_t).\]
The process $\pi^i$ can be calculated in terms of $N^i$ in this case, see \cite{PS02}, (2.8),(2.9). Indeed,
\[\pi^i_t=\frac{\phi^i_t}{1+\phi^i_t}\]
where
\[\phi^i_t=\lambda^i e^{(\lambda^i+\mu_0^i-\mu_1^i)t}e^{N^i_t\log(\mu_1^i/\mu^i_0)}\int_0^te^{-(\lambda^i+\mu_0^i-\mu_1^i)s}e^{-N^i_s\log(\mu_1^i/\mu^i_0)}ds. \]
In particular, it can be seen that the process $\phi^i$ is increasing in the case $\lambda^i\geq \mu_1^i-\mu_0^i\geq 0$, and therefore so is $Y^i.$ It is furthermore easily seen that the integrability assumptions in Proposition \ref{prop:cont_sum} are fulfilled. Therefore, we obtain that the optimal stopping time in the multidimensional Poisson disorder problem is -- under the assumption $\lambda^i\geq \mu_1^i-\mu_0^i\geq 0,\,i=1,\dots,m$ -- given by
\begin{align*}
\tau^*&=\inf\left\{t: \sum_{i=1}^mY^i_t\geq 0\right\}\\
&=\inf\left\{t: \sum_{i=1}^m (c^i+\lambda^i)\pi^i_t\geq \sum_{i=1}^m \lambda^i\right\},
\end{align*}
so that the optimal stopping time is a first entrance time into a half space 
\[S_m=\left\{(z_1,..,z_m):\sum_{i=1}^m (c^i+\lambda^i)z_i\geq \sum_{i=1}^m \lambda^i\right\}\]
for the $m$-dimensional posterior probability process. 

Let us underline that the elementary line of argument used here breaks down for general parameter sets, where more sophisticated techniques, such as pasting conditions, have to be applied. This, however, seems to be very hard to carry out in this multidimensional formulation, and there does not seem to be hope to obtain an explicit solution in these cases. 
It is furthermore interesting to note that Remark \ref{rem:finite_time} implies that our solution to the (multidimensional) Poisson disorder problem also solves the Poisson disorder problem in the finite time case, i.e. the optimal boundary is not time-dependent. This is, of course, in strong contrast to, e.g., the Wiener disorder problem, see \cite{gapeev2006wiener}. 

\subsection{Optimal investment problem for negative subordinators}
One of the most famous multidimensional optimal stopping problems is the optimal investment problem studied, e.g., in \cite{DS}, \cite{OS}, \cite{ho}, \cite{GS}, \cite{CI2}, \cite{NR}, and \cite{christensen-salminen}. It can be described as follows: Let\ $r>0$ a fixed discounting factor, $(L^1,\dots,L^m)$ be a $d$-dimensional L\'evy process 
 and let furthermore $y_1,\dots,y_d\in(0,\infty)$. The optimal stopping problem can then be formulated as 
\[\sup_{\tau}E(e^{-r\tau}(1-y_1e^{L^1_\tau}-\dots-y_me^{L^m_\tau})).\]
At the investment time $\tau$ the investor gets the fixed standardized reward 1 and has to pay the sum of the costs $y_1e^{L^1_\tau},\dots,\,y_me^{L^m_\tau}$ (with the reward as num\'eraire).
In the notation of this paper we are faced with a sum problem for $\left(\sum_{i=1}^mX_t^i\right)_{t\in[0,\infty)}$ where
\[X_t^i:=e^{-rt}\left ( \frac{1}{m}-y_ie^{L^i_t}\right ).\]
In the case that $(L^1,\dots,L^d)$ is a $d$-dimensional (possibly correlated) Brownian motion with drift, it was conjectured in \cite{ho} that the optimal stopping time is a first entrance time into a half-space for the process $(e^{L^1},\dots,e^{L^m})$. But this was disproved for all nontrivial cases, see \cite{CI2}, \cite{NR}. The structure of the optimal boundary is much more complicated in this case and can be characterized as the solution to a nonlinear integral equation, also for more general L\'evy processes with only negative jumps, see \cite{christensen-salminen}. An explicit description cannot be expected to exist in general. 

In a special case, however, our theory immediately leads to an explicit solution. We assume now that $L^1,\dots,L^d$ are negative subordinators, i.e., all (standardized) cost factors have non-increasing sample paths. This case is typically implicitly excluded in the general theory. For example, the integral equation for the optimal boundary obtained in \cite{christensen-salminen} has no unique solution in this case.

 In terms of the characteristic triple, the assumption means that the jump measure $\Pi$ is concentrated on\ $(-\infty,0)^m$ and the drift vector has non-positive entries $a_1,\dots,a_m$.  Applying It\^o's formula for jump processes yields the Doob-Meyer decomposition as
\[X^i_t= X_0^i+M_t^i+\int_0^tY_s^ids,\]
where 
\[Y_s^i=e^{-rs}\left(c_ie^{L^i_s}-\frac{r}{m}\right)\]
and
\[c_i=y_i\left(r-a_i-\int_{(-\infty,0)^m}(e^{z_i}-1)\Pi(dz)\right)>0. \]
According to Remark \ref{rem:prod}, the sum problem is monotone and the myopic stopping time is 
\[\tau^*=\inf\left\{t\geq 0:\sum_{i=1}^mc_ie^{L^i_s}\leq {r}  \right\}. \]
As each $X^i$ is bounded, it is immediate that \eqref{eq:V1} and \eqref{eq:V2} are fulfilled. We obtain that in this case a stopping rule as conjectured in \cite{ho} is indeed optimal.

Even more, the minimum of $\tau^*$ and $L$ is the optimal stopping time for the investment problem with time horizon $L$ by Remark \ref{rem:finite_time}. For other underlying processes, such a time-independent solution can of course not be expected. 


%

\section*{Acknowledgments}
We thank the referees for their careful reading of the manuscript and their very helpful suggestions. 

\bibliographystyle{abbrvnat}
\bibliography{multidim_monotone_rev2}

\end{document}